\newtheorem{theorem}{Theorem}[section]
\renewenvironment{proof}{\textbf{Proof.}}{\QED\bigskip}
\definecolor{ddarkbrown}{rgb}{0.5,0.2,0.05} \definecolor{bbluegray}{rgb}{0.05,0,0.5}
\newcommand{\BEAS}{\begin{eqnarray*}}
\newcommand{\EEAS}{\end{eqnarray*}}
\newcommand{\BEA}{\begin{eqnarray}}
\newcommand{\EEA}{\end{eqnarray}}
\newcommand{\BEQ}{\begin{equation}}
\newcommand{\EEQ}{\end{equation}}
\newcommand{\BIT}{\begin{itemize}}
\newcommand{\EIT}{\end{itemize}}
\newcommand{\BNUM}{\begin{enumerate}}
\newcommand{\ENUM}{\end{enumerate}}
\newcommand{\BA}{\begin{array}}
\newcommand{\EA}{\end{array}}
\newcommand{\ie}{{\it i.e.}}
\newcommand{\reals}{{\mathbb R}}
\newcommand{\complex}{{\mathbb C}}
\newcommand{\symm}{{\mbox{\bf S}}}  
\newcommand{\herm}{{\mbox{\bf H}}}  
\newcommand{\Rank}{\mathop{\bf Rank}}
\newcommand{\Tr}{\mathop{\bf Tr}}
\newcommand{\diag}{\mathop{\bf diag}}
\newcommand{\idm}{\mathbf{I}}
\newcommand{\QED}{~~\rule[-1pt]{6pt}{6pt}}
\renewcommand\Re{\operatorname{Re}}
\renewcommand\Im{\operatorname{Im}}
\newcommand{\C}{{\mathbb C}}
\newtheorem*{lemma*}{Lemma}
\begin{document}
\title{Phase retrieval for imaging problems.}

\author{Fajwel Fogel}
\address{C.M.A.P., \'Ecole Polytechnique, UMR CNRS 7641}
\email{fajwel.fogel@cmap.polytechnique.fr}

\author{Ir\`ene Waldspurger}
\address{D.I., \'Ecole Normale Sup\'erieure, Paris.}
\email{waldspur@clipper.ens.fr}

\author{Alexandre d'Aspremont} 
\address{CNRS \& D.I., UMR 8548, \vskip 0ex
\'Ecole Normale Sup\'erieure, Paris, France.}
\email{aspremon@ens.fr}

\keywords{Phase recovery, semidefinite programming, X-ray diffraction, molecular imaging, Fourier optics.}
\date{\today}
\subjclass[2010]{94A12, 90C22, 90C27.}

\begin{abstract}
We study convex relaxation algorithms for phase retrieval on imaging problems. We show that exploiting structural assumptions on the signal and the observations, such as sparsity, smoothness or positivity, can significantly speed-up convergence and improve recovery performance. We detail numerical results in molecular imaging experiments simulated using data from the Protein Data Bank (PDB).
\end{abstract}
\maketitle

\section{Introduction}
Phase retrieval seeks to reconstruct a complex signal, given a number of observations on the {\em magnitude} of linear measurements, i.e. solve
\BEQ \label{eq:ph-retrieval}
\BA{ll}
\mbox{find} & x\\
\mbox{such that} & |Ax|=b
\EA\EEQ
in the variable $x\in\complex^p$, where $A\in\reals^{n \times p}$ and $b\in\reals^n$. This problem has direct applications in X-ray and crystallography imaging, diffraction imaging, Fourier optics or microscopy for example, in problems where physical limitations mean detectors usually capture the intensity of observations but cannot recover their phase. In what follows, we will focus on problems arising in diffraction imaging, where $A$ is usually a Fourier transform, often composed with one or multiple masks (a technique sometimes called ptychography). The Fourier structure, through the FFT, considerably speeds up basic linear operations, which allows us to solve large scale convex relaxations on realistically large imaging problems. We will also observe that in many of the imaging problems we consider, the Fourier transform is very sparse, with {\em known support} (we lose the phase but observe the magnitude of Fourier coefficients), which allows us to considerably reduce the size of our convex phase retrieval relaxations.

Because the phase constraint $|Ax|=b$ is nonconvex, the phase recovery problem~\eqref{eq:ph-retrieval} is non-convex. Several greedy algorithms have been developed (see \citep{gerchberg,Fien82,Grif84,Baus02} among others), which alternate projections on the range of $A$ and on the nonconvex set of vectors $y$ such that $|y| = |A x|$. While empirical performance is often good, these algorithms can stall in local minima. A convex relaxation was introduced in \citep{Chai11} and \citep{Cand11} (who call it PhaseLift) by observing that $|A x|^2$ is a linear function of $X = x x^*$ which is a rank one Hermitian matrix, using the classical lifting argument for nonconvex quadratic programs developed in \citep{Shor87,Lova91}. The recovery of $x$ is thus expressed as a rank minimization problem over positive semidefinite Hermitian matrices $X$ satisfying some linear conditions, i.e. a matrix completion problem. This last problem has received a significant amount of attention because of its link to compressed sensing and the NETFLIX collaborative filtering problem. This minimum rank matrix completion problem is approximated by a semidefinite program which has been shown to recover $x$ for several (random) classes of observation operators~$A$ \citep{Cand11,Cand11a,Cand13}. 

On the algorithmic side, \citep{Wald12a} showed that the phase retrieval problem~\eqref{eq:ph-retrieval} can be reformulated in terms of a single phase variable, which can be read as an extension of the MAXCUT combinatorial graph partitioning problem over the unit complex torus, allowing fast algorithms designed for solving semidefinite relaxations of MAXCUT to be applied to the phase retrieval problem.

On the experimental side, phase recovery is a classical problem in Fourier optics for example \citep{Good08}, where a diffraction medium takes the place of a lens. This has a direct applications in X-ray and crystallography imaging, diffraction imaging or microscopy \citep{Harr93,Bunk07,John08,Miao08,Dier10}.

Here, we implement and study several efficient convex relaxation algorithms for phase retrieval on imaging problem instances where $A$ is based on a Fourier operator. We show in particular how structural assumptions on the signal and the observations (e.g. sparsity, smoothness, positivity, known support, oversampling, etc.) can be exploited to both speed-up convergence and improve recovery performance. While no experimental data is available from diffraction imaging problems with multiple randomly coded illuminations, we simulate numerical experiments of this type using molecular density information from the protein data bank \citep{Berm02}. Our results show in particular that the convex relaxation is stable and that in some settings, as few as two random illuminations suffice to reconstruct the image.

The paper is organized as follows. Section~\ref{s:algos} briefly recalls the structure of some key algorithms used in phase retrieval. Section~\ref{s:image} describes applications to imaging problems and how structural assumptions can sifnificantly reduce the cost of solving large-scale instances and improve recovery performance. Section~\ref{s:numres} details some numerical experiments while Section~\ref{s:uguide} describes the interface to the numerical library developed for these problems.

\subsection*{Notations}
We write $\symm_p$ (resp. $\herm_p$) the cone of symmetric (resp. Hermitian) matrices of dimension $p$ ; $\symm_p^+$ (resp. $\herm_p^+$) denotes the set of positive symmetric (resp. Hermitian) matrices. We write $\|\cdot\|_p$ the Schatten $p$-norm of a matrix, that is the $p$-norm of the vector of its eigenvalues (in particular, $\|\cdot\|_\infty$ is the spectral norm). We write $A^\dag$  the (Moore-Penrose) pseudoinverse of a matrix $A$, and $A\circ B$ the Hadamard (or componentwise) product of the matrices $A$ and $B$. For $x\in\reals^p$, we write $\diag(x)$ the matrix with diagonal $x$. When $X\in\herm_p$ however, $\diag(X)$ is the vector containing the diagonal elements of $X$. For $X\in\herm_p$, $X^*$ is the {H}ermitian transpose of $X$, with $X^*=(\bar X)^T$. Finally, we write $b^2$ the vector with components $b_i^2$, $i=1,\ldots,n$.

\section{Algorithms}\label{s:algos}

In this section, we briefly recall several basic algorithmic approaches to solve the phase retrieval problem~\eqref{eq:ph-retrieval}. Early methods were all based on extensions of an alternating projection algorithm. However, recent results showed that phase retrieval could be interpreted as a matrix completion problem similar to the NETFLIX problem, a formulation which yields both efficient convex relaxations and recovery guarantees.

\subsection{Greedy algorithms}\label{ss:greedy}
The phase retrieval problem~\eqref{eq:ph-retrieval} can be rewritten
\BEQ\label{eq:split-pb-obs}
\BA{ll}
\mbox{minimize} & \|Ax-y\|_2^2\\
\mbox{subject to} & |y|=b
\EA\EEQ
where we now optimize over both phased observations $y \in\complex^n$ and signal $x\in\complex^p$. Several greedy algorithms attempt to solve this problem using variants of alternating projections, one iteration minimizing the quadratic error (the objective of~\eqref{eq:split-pb-obs}), the next normalizing the moduli (to satisfy the constraint). We detail some of the most classical examples in the paragraphs that follow.

The algorithm~\ref{alg:GS} by \citep{gerchberg} for instance seeks to reconstruct $y=Ax$ and alternates between orthogonal projections on the range of $A$ and normalization of the magnitudes $|y|$ to match the observations $b$. The cost per iteration of this method is minimal but convergence (when it happens) is often slow. 

\begin{algorithm}[ht]
\caption{Gerchberg-Saxton.}
\begin{algorithmic} [1]
\REQUIRE An initial $y^1\in\mathbf{F}$, i.e. such that $|y^1|=b$.
\FOR{$k=1,\ldots,N-1$}
\STATE Set
\BEQ\label{alg:GS}\tag{Gerchberg-Saxton}
y_i^{k+1} = b_i\, \frac{(A A^\dag y^k)_i} {|(A A^\dag y^k)_i|},\quad i=1,\ldots,n.
\EEQ
\ENDFOR
\ENSURE $y_N \in \mathbf{F}$.
\end{algorithmic} 
\end{algorithm}

A classical ``input-output'' variant, detailed here as algorithm~\ref{alg:fienup}, introduced by \citep{Fien82} adds an extra penalization step which usually speeds up convergence and improves recovery performance when additional information is available on the support of the signal. Oversampling the Fourier transform forming $A$ in imaging  problems usually helps performance as well. Of course, in all these cases, convergence to a global optimum cannot be guaranteed but empirical recovery performance is often quite good. 

\begin{algorithm}[ht]
\caption{Fienup}
\begin{algorithmic} [1]
\REQUIRE An initial $y^1\in\mathbf{F}$, i.e. such that $|y^1|=b$, a parameter $\beta>0$.
\FOR{$k=1,\ldots,N-1$}
\STATE Set
\[
w_i= \frac{(A A^\dag y^k)_i} {|(A A^\dag y^k)_i|},\quad i=1,\ldots,n.
\]
\STATE Set
\BEQ\label{alg:fienup}\tag{Fienup}
y_i^{k+1} = y_i^{k} - \beta (y_i^{k}-b_i w_i)
\EEQ
\ENDFOR
\ENSURE $y_N \in \mathbf{F}$.
\end{algorithmic} 
\end{algorithm}

\subsection{PhaseLift: semidefinite relaxation in signal}\label{ss:phaselift}
Using a classical lifting argument by \citep{Shor87}, and writing
\[
|a_i^*x|^2=b_i^2 \Longleftrightarrow \Tr(a_ia_i^*xx^*)=b_i^2
\]
\citep{Chai11,Cand11} reformulate the phase recovery problem~\eqref{eq:ph-retrieval} as a matrix completion problem, written
\[\BA{ll}
\mbox{minimize} & \Rank(X)\\
\mbox{subject to} & \Tr(a_ia_i^*X)=b_i^2,\quad i=1,\ldots,n\\
& X \succeq 0
\EA\]
in the variable $X\in\herm_p$, where $X=xx^*$ when exact recovery occurs. This last problem can be relaxed as
\BEQ\label{eq:ph-lift}
\BA{ll}
\mbox{minimize} & \Tr(X)\\
\tag{PhaseLift}
\mbox{subject to} & \Tr(a_ia_i^*X)=b_i^2,\quad i=1,\ldots,n\\
& X \succeq 0
\EA\EEQ
which is a semidefinite program (called \ref{eq:ph-lift} by \citet{Cand11}) in the variable $X\in\herm_p$. This problem is solved in~\citep{Cand11} using first order algorithms implemented in \citep{Beck12}. This semidefinite relaxation has been shown to recover the true signal $x$ exactly for several classes of observation operators~$A$ \citep{Cand11,Cand11a,Cand13}. 

\subsection{PhaseCut: semidefinite relaxation in phase}\label{ss:sdp}
As in \citep{Wald12a} we can rewrite the phase reconstruction problem~\eqref{eq:ph-retrieval} in terms of a phase variable $u$ (such that $|u|=1$) instead of the signal~$x$. In the noiseless case, we then write the constraint $|Ax |= b$ as $Ax = \diag(b)u$, where $u \in \complex^n$ is a phase vector, satisfying $|u_i| = 1$ for $i=1,\ldots,n$, so problem~\eqref{eq:ph-retrieval} becomes
\BEQ\label{eq:split-pb}
\BA{ll}
\mbox{minimize} & \|Ax-\diag(b)u\|_2^2\\
\mbox{subject to} & |u_i|=1
\EA\EEQ
where we optimize over both phase $u\in\complex^n$ and signal $x\in\complex^p$. While the objective of this last problem is jointly convex in $(x,u)$, the phase constraint $|u_i|=1$ is not.

Now, given the phase, signal reconstruction is a simple least squares problem, i.e. given $u$ we obtain $x$ as
\BEQ
x=A^\dag\diag(b)u
\EEQ
where $A^\dag$ is the pseudo inverse of $A$. Replacing $x$ by its value in~\eqref{eq:split-pb}, the phase recovery problem becomes
\BEQ\label{eq:ph-partit}
\BA{ll}
\mbox{minimize} & u^*Mu\\
\mbox{subject to} & |u_i|=1,\quad i=1,\ldots n,
\EA\EEQ
in the variable $u\in\complex^n$, where the Hermitian matrix
\[
M=\diag(b)(\idm-AA^\dag)\diag(b)
\] 
is positive semidefinite. This problem is non-convex in the phase variable $u$. \citep{Wald12a} detailed greedy algorithm~\ref{alg:greedy} to locally optimize~\eqref{eq:ph-partit} in the phase variable.

\begin{algorithm}[ht]  
\caption{Greedy algorithm in phase.} 
\begin{algorithmic} [1]
\REQUIRE An initial $u\in\complex^n$ such that $|u_i|=1$, $i=1,\ldots,n$. An integer $N>1$.
\FOR{$k=1,\ldots,N$}
\FOR {$i = 1,\ldots n$}
\STATE Set
\BEQ\label{alg:greedy}\tag{Greedy}
u_i=\frac{-\sum_{j\neq i} M_{ji} \bar u_j}{\left| \sum_{j\neq i} M_{ji} \bar u_j\right|}
\EEQ
\ENDFOR
\ENDFOR
\ENSURE $u\in\complex^n$ such that $|u_i|=1$, $i=1,\ldots,n$.
\end{algorithmic} 
\end{algorithm}

\noindent A convex relaxation to~\eqref{eq:ph-partit} was also derived in~\citep{Wald12a} using the classical lifting argument for nonconvex quadratic programs developed in \citep{Shor87,Lova91}. This relaxation is written
\BEQ\label{eq:ph-SDP}\tag{PhaseCut}
\BA{ll}
\mbox{min.} & \Tr(UM)\\
\mbox{subject to} & \diag(U)=1,\,U\succeq 0,
\EA\EEQ
which is a semidefinite program (SDP) in the matrix $U\in\herm_n$. This problem has a structure similar to the classical MAXCUT relaxation and instances of reasonable size can be solved using specialized implementations of interior point methods designed for that problem \citep{Helm96}. Larger instances are solved in \citep{Wald12a}  using the block-coordinate descent algorithm~\ref{alg:block}.

\begin{algorithm}[ht]  
\caption{Block Coordinate Descent Algorithm for {\bf PhaseCut}.} 
\begin{algorithmic} [1]
\REQUIRE An initial $U^0=\idm_n$ and $\nu>0$ (typically small). An integer $N>1$.
\FOR{$k=1,\ldots,N$}
\STATE Pick $i\in[1,n]$.
\STATE Compute
\BEQ\label{alg:block}\tag{BlockPhaseCut}
u=U^k_{i^c,i^c}M_{i^c,i}
\quad \mbox{and}\quad
\gamma=u^* M_{i^c,i}
\EEQ
\STATE If $\gamma>0$, set
\[
U^{k+1}_{i^c,i}=U^{k+1 *}_{i,i^c}=-\sqrt{\frac{1-\nu}{\gamma}}x
\]
else
\[
U^{k+1}_{i^c,i}=U^{k+1 *}_{i,i^c}=0.
\]
\ENDFOR
\ENSURE A matrix $U\succeq 0$ with $\diag(U)=1$.
\end{algorithmic} 
\end{algorithm} 

Ultimately, algorithmic choices heavily depend on problem structure, and these will be discussed in detail in the section that follows. In particular, we will study how to exploit structural information on the signal (nonnegativity, sparse 2D FFT, etc.), to solve realistically large instances formed in diffraction imaging applications.

\section{Imaging problems}\label{s:image}


In the imaging problems we study here, various illuminations of a single object are performed through randomly coded masks, hence the matrix $A$ is usually formed using a combination of random masks and Fourier transforms, and we have significant structural information on both the signal we seek to reconstruct (regularity, etc.) and the observations (power law decay in frequency domain, etc.). Many of these additional structural hints can be used to speedup numerical operations, convergence and improve phase retrieval performance. The paragraphs that follow explore these points in more detail.

\subsection{Fourier operators} \label{ss:fourier}
In practical applications, because of the structure of the linear operator~$A$, we may often reduce numerical complexity, using the Fourier structure of $A$ to speedup the single matrix-vector product in algorithm~\ref{alg:block}. We detail the case where $A$ corresponds to a Fourier transform combined with $k$ random masks, writing $I_1,...,I_k\in\C^p$ be the illumination masks. The image by $A$ of some signal $x\in\C^p$ is then
\begin{equation*}
Ax=\begin{pmatrix}\mathcal{F}(I_1\circ x)\\\vdots\\\mathcal{F}(I_k\circ x)\end{pmatrix},
\end{equation*}
and the pseudo-inverse of $A$ also has a simple structure, with
\begin{equation*}
A^\dag\left(\begin{smallmatrix}y_1\\\vdots\\y_k\end{smallmatrix}\right)=\underset{l=1}{\overset{k}{\sum}}\,\mathcal{F}^{-1}(y_l)\circ I_l'
\end{equation*}
where $I'_l$ is the dual filter of $I_l$, which is 
\[
I'_l=\overline{I}_l/\left(\underset{s}{\sum}|I_s|^2\right).
\]
With the fast Fourier transform, computing the image of a vector by $A$ or $A^\dag$ only requires $O(kp\log(p))$ floating-point operations. For any $v\in\C^n$, $Mv=\diag(b)(\idm-AA^\dag)\diag(b)v$ may then be computed using $O(kp\log p)$ operations instead of $O(k^2p^2)$ for naive matrix-vector multiplications.

In algorithms~\ref{alg:greedy} and~\ref{alg:block}, we also need to extract quickly columns from $M$ without having to store the whole matrix. Extracting the column corresponding to index $i$ in block $l\leq k$ reduces to the computation of $AA^\dag \delta_{i,l}$ where $\delta_{i,l}\in\C^{kp}$ is the vector whose coordinates are all zero, except the $i$-th one of $l$-th block. If we write $\delta_i\in\C^p$ the Dirac in $i$, the preceding formulas yields
\begin{equation*}
AA^\dag\delta_{i,l}=\begin{pmatrix}
\delta_i\star\mathcal{F}(I_1\circ I'_l)\\
\vdots\\
\delta_i\star\mathcal{F}(I_k\circ I'_l)
\end{pmatrix}.
\end{equation*}
Convolution with $\delta_i$ is only a shift and vectors $\mathcal{F}(I_s\circ I'_l)$ may be precomputed so this operation is very fast.

\subsection{Low rank iterates} \label{ss:low-rnk}
In instances where exact recovery occurs, the solution to the semidefinite programming relaxation~\eqref{eq:ph-SDP} has rank one. It is also likely to have low rank in a neighborhood of the optimum. This means that we can often store a compressed version of the iterates $U$ in algorithm~\ref{alg:block} in the form of their low rank approximation $U=VV^*$ where $V\in\complex^{n \times k}$. Each iteration updates a single row/column of~$U$ which corresponds to a rank two update of $U$, hence updating the SVD means computing a few leading eigenvalues of the matrix $VV^*+LL^*$ where $L\in\complex^{n\times 2}$. This update can be performed using Lanczos type algorithms and has complexity $O(kn\log n)$. Compressed storage of $U$ saves memory and also speeds-up the evaluation of the vector matrix product $U_{i^c,i^c}M_{i^c,i}$ which costs $O(nk)$ given a decomposition $U_{i^c,i^c}=VV^*$, instead of $O(n^2)$ using a generic representation of the matrix~$U$.

\subsection{Bounded support} \label{ss:regularity}
In many inverse problems the signal we are seeking to reconstruct is known to be sparse in some basis and exploiting this structural information explicitly usually improves signal recovery performance. This is for example the basis of compressed sensing where $\ell_1$ penalties encourage sparsity and provide recovery guarantees when the true signal is actually sparse.

The situation is a lot simpler in some of the molecular imaging problems we are studying below since the electron density we are trying to recover is often smooth, which means that its Fourier transform will be sparse, {\em with known support}. While we lose the phase, we do observe the magnitude of the Fourier coefficients so we can rank them by magnitude. This allows us to considerably reduce the size of the SDP relaxation without losing much reconstruction fidelity, i.e. in many cases we observe that a significant fraction of the coefficients of $b$ are close to zero. From a computational point of view, sparsity in $b$ allows us to solve a truncated semidefinite relaxation~\eqref{eq:ph-SDP}. See Figure~\ref{fig:caf} for an illustration of this phenomenon on the caffeine molecule.

 \begin{figure}[htbp]
 \psfrag{Orig}[b][c]{}
 \psfrag{FT}[b][c]{}
 \psfrag{OrigFT}[b][c]{}
 \includegraphics[scale=0.8]{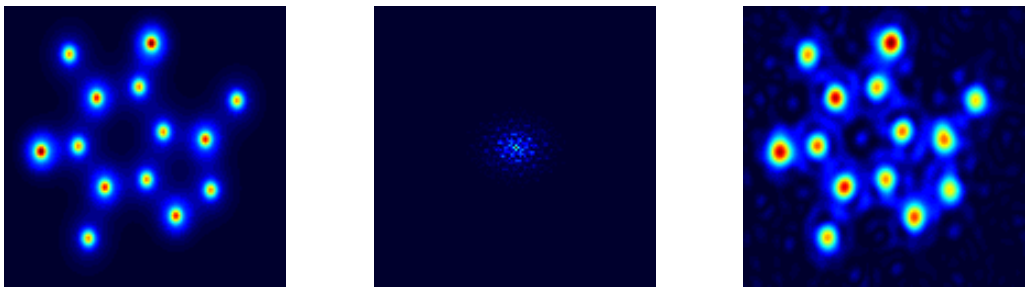}
 \caption{Electronic density for the caffeine molecule (left), its 2D FFT transform (diffraction pattern, center), the density reconstructed using 2\% of the coefficients with largest magnitude in the FFT (right). \label{fig:caf}}
 \end{figure}


Indeed, without loss of generality, we can reorder the observations $b$ such that we approximately have $ b=(b_1^T,0)^T$. Similarly, we note
\[
u=\begin{pmatrix} u_1 \\ u_2  \end{pmatrix},
\qquad
A=\begin{pmatrix} A_1 \\ A_2  \end{pmatrix},
\qquad
A^\dag=\begin{pmatrix} (A^\dag)_1 &  (A^\dag)_2  \end{pmatrix}.
\]
Using the fact that $b_2=0$, the matrix $M$ in the objective of~\eqref{eq:ph-partit} can itself be written in blocks, that is
\[
\qquad
M=\begin{pmatrix} M_1 & 0 \\ 0 & 0  \end{pmatrix}.
\]
Since $b_2=0$, any complex vector with coefficients of magnitude one can be taken for $u_2$ and the optimization problem~\eqref{eq:ph-partit} is equivalent to 
\BEQ\label{eq:ph-partit-reg}
\BA{ll}
\mbox{minimize} & u_1^*M_1u_1\\
\mbox{subject to} & |u_{1_i}|=1,\quad i=1,\ldots n,
\EA\EEQ
in the variable $u_1\in\complex^{n_1}$, where the Hermitian matrix
\[
M_1=\diag(b_1)(\idm-A_1(A^\dag)_1)\diag(b_1)
\] 
is positive semidefinite. This problem can in turn be relaxed into a PhaseCut problem which is usually considerably smaller than the original~\eqref{eq:ph-SDP} problem since $M_1$ is typically a fraction of the size of $M$.

\subsection{Real, positive densities} \label{ss:positivity}
In some cases, such as imaging experiments where a random binary mask is projected on an object for example, we know that the linear observations are formed as the Fourier transform of a {\em positive} measure. This introduces additional restrictions on the structure of these observations, which can be written as convex constraints on the phase vector. We detail two different ways of accounting for this positivity assumption.

\subsubsection{Direct nonnegativity constraints on the density}
In the case where the signal is real and nonnegative, \citep{Wald12a} show that problem~\eqref{eq:split-pb} can modified to specifically account for the fact that the signal is real, by writing it
\[
\min_{\substack{u\in\complex^n,\,|u_i|=1,\\x\in\reals^p}} ~\|Ax-\diag(b)u\|_2^2,
\]
using the operator ${\mathcal T}(\cdot)$ defined as
\BEQ\label{eq:T-op}
{\mathcal T}(Z)=
\left(\BA{cc}
\Re(Z) & -\Im(Z)\\
\Im(Z) & \Re(Z)
\EA\right)
\EEQ
we can rewrite the phase problem on real valued signal as
\[\BA{ll}
\mbox{minimize} & \left\| {\mathcal T} (A)
\left(\BA{c} x \\ 0 \EA\right) 
-
\diag\left(\BA{c} b \\ b \EA\right)
\left(\BA{c}\Re(u) \\ \Im(u)\EA\right)
\right\|_2^2\\
\mbox{subject to} & u\in\complex^n,\,|u_i|=1\\
& x\in\reals^p.
\EA\]
The optimal solution of the inner minimization problem in $x$ is given by $x=A_2^\dag B_2v$, where
\[
A_2=\left(\BA{c} \Re(A) \\ \Im(A) \EA\right),
\quad
B_2=\diag\left(\BA{c} b \\ b \EA\right),
\quad \mbox{and} \quad
v = \left(\BA{c}\Re(u) \\ \Im(u)\EA\right)
\] 
hence the problem is finally rewritten
\[\BA{ll}
\mbox{minimize} & \| (A_2 A_2^\dag B_2- B_2)v\|_2^2\\
\mbox{subject to} & v_i^2+v_{n+i}^2=1,\quad i=1,\ldots,n,
\EA\]
in the variable $v\in\reals^{2n}$. This can be relaxed as above by the following problem
\BEQ\label{eq:sdp-real}\tag{PhaseCutR}
\BA{ll}
\mbox{minimize} & \Tr(VM_2)\\
\mbox{subject to} & V_{ii}+V_{n+i,n+i}=1,\quad i=1,\ldots,n,\\
& V \succeq 0,
\EA\EEQ
which is a semidefinite program in the variable $V\in\symm_{2n}$, where 
\[
M_2=(A_2 A_2^\dag B_2- B_2)^T(A_2 A_2^\dag B_2- B_2)=B_2^T(\idm-A_2A_2^\dag)B_2.
\]
Because $x=A_2^\dag B_2v$ for real instances, we can add a nonnegativity constraint to this relaxation, using
\[
xx^T=(A_2^\dag B_2)uu^T(A_2^\dag B_2)^T
\]
and the relaxation becomes
\[\BA{ll}
\mbox{minimize} & \Tr(VM_2)\\
\mbox{subject to} & (A_2^\dag B_2)V(A_2^\dag B_2)^T \geq 0,\\
& V_{ii}+V_{n+i,n+i}=1,\quad i=1,\ldots,n,\\
& V \succeq 0,
\EA\]
which is a semidefinite program in $V\in\symm_{2n}$.

\subsubsection{Bochner's theorem and the Fourier transform of positive measures}
Another way to include nonnegativity constraints on the signal, which preserves some of the problem structure, is to use Bochner's theorem. Recall that a function $f:\reals^s \mapsto \complex$ is {\em positive semidefinite} if and only if the matrix $B$ with coefficients $B_{ij}=f(x_i-x_j)$ is Hermitian positive semidefinite for any sequence $x_i\in \reals^s$. Bochner's theorem then characterizes Fourier transforms of positive measures.

\begin{theorem} {\bf (Bochner)} 
A function $f:\reals^s \mapsto \complex$ is positive semidefinite if and only if it is the Fourier transform of a (finite) nonnegative Borel measure.
\end{theorem}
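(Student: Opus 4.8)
The plan is to prove the two implications separately, since the ``only if'' direction is a one-line computation while the ``if'' direction---reconstructing the measure from positive semidefiniteness---carries essentially all of the difficulty. I would first dispatch the easy direction. Suppose $f(x)=\int_{\reals^s} e^{i\langle x,\xi\rangle}\,d\mu(\xi)$ for a finite nonnegative Borel measure $\mu$. Then for any points $x_1,\dots,x_n$ and scalars $c_1,\dots,c_n$,
\[
\sum_{j,k} c_j\bar c_k\, f(x_j-x_k)=\int_{\reals^s}\Bigl|\sum_j c_j e^{i\langle x_j,\xi\rangle}\Bigr|^2 d\mu(\xi)\ge 0,
\]
so the matrix with entries $f(x_j-x_k)$ is Hermitian positive semidefinite, which is exactly the asserted positive semidefiniteness of $f$.

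For the converse I would assume $f$ continuous (the standard hypothesis, and automatic on the Fourier-transform side by dominated convergence). Positive semidefiniteness already forces the elementary bounds I need: the case $n=1$ gives $f(0)\ge 0$, and the $2\times 2$ condition gives $f(-x)=\overline{f(x)}$ and $|f(x)|\le f(0)$, so $f$ is bounded. The core idea is to manufacture a candidate \emph{spectral density} by smoothed Fourier inversion. For $\epsilon>0$ I would set
\[
p_\epsilon(\xi)=(2\pi)^{-s}\int_{\reals^s} f(x)\,e^{-i\langle \xi,x\rangle}\,e^{-\epsilon|x|^2/2}\,dx,
\]
where the Gaussian factor guarantees absolute convergence of the integral.

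The key step is to show $p_\epsilon(\xi)\ge 0$ for every $\xi$. I would obtain this from the continuous analogue of the positive semidefinite condition: approximating a double integral by Riemann sums and using the hypothesis on each sum shows that $\iint f(x-y)\,\phi(x)\overline{\phi(y)}\,dx\,dy\ge 0$ for every sufficiently nice test function $\phi$. Choosing $\phi(x)=e^{i\langle\xi,x\rangle}e^{-\epsilon|x|^2/4}$ then reduces this double integral, after a change of variables, to a strictly positive multiple of $p_\epsilon(\xi)$, giving its nonnegativity. The nonnegative measures $d\mu_\epsilon=p_\epsilon(\xi)\,d\xi$ have total mass controlled by $f(0)$ (by the inversion identity evaluated at $x=0$), so by weak-$*$ compactness of bounded measures, together with a tightness estimate, a subsequence converges to a finite nonnegative measure $\mu$. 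Since Fourier inversion gives $f(x)e^{-\epsilon|x|^2/2}=\int_{\reals^s} e^{i\langle x,\xi\rangle}\,d\mu_\epsilon(\xi)$, letting $\epsilon\to 0$ and using continuity of $f$ on the left and weak-$*$ convergence on the right yields $f(x)=\int_{\reals^s} e^{i\langle x,\xi\rangle}\,d\mu(\xi)$.

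The hard part will be the converse, and within it the two technical hinges are the nonnegativity of $p_\epsilon$ and the tightness of the family $\{\mu_\epsilon\}$ needed so that the limiting measure is genuinely finite rather than losing mass at infinity; the Gaussian cutoff is precisely the device that makes both of these manageable and that lets the reconstructed measure recover $f$ in the limit.
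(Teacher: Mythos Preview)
The paper does not prove this theorem; its entire proof reads ``See \emph{[Berg84b]} for example.'' So there is no argument in the paper to compare against---the authors defer entirely to a standard reference, since Bochner's theorem is a classical result being quoted as a tool, not a contribution of the paper.

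Your sketch is one of the standard proofs and is essentially sound. You correctly identify that the theorem as stated in the paper omits the continuity hypothesis on $f$, which is needed for the harder implication (and is automatic for the easier one). The Gaussian-regularised inversion $p_\epsilon$ and the verification $p_\epsilon\ge 0$ via a continuous quadratic form are exactly the right moves. The only place I would push back is the endgame: you invoke weak-$*$ compactness plus a ``tightness estimate'' and then pass to the limit in $\int e^{i\langle x,\xi\rangle}\,d\mu_\epsilon$. Since $e^{i\langle x,\cdot\rangle}\notin C_0(\reals^s)$, weak-$*$ convergence alone is not enough for that last step; you genuinely need tightness to upgrade to convergence against bounded continuous functions, and you have not said where tightness comes from. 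The clean way is to use continuity of $f$ at $0$: the identity $\int d\mu_\epsilon = f(0)$ together with the standard bound on tail mass in terms of $1-\Re f$ near the origin (the L\'evy-type inequality) gives uniform tightness. With that filled in, your argument goes through.
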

\begin{proof}
See \citep{Berg84b} for example.
\end{proof}

For simplicity, we first illustrate this in dimension one. Suppose that we observe the magnitude of the Fourier transform of a discrete nonnegative signal $x\in\reals^p$ so that
\[
\left| \mathcal{F} x \right|=b
\]
with $b\in\reals^n$. Our objective now is to reconstruct a phase vector $u\in\complex^n$ such that $|u|=1$ and
\[
\mathcal{F} x =\diag(b) u.
\]
If we define the Toeplitz matrix
\[
B_{ij}(y)=y_{|i-j|+1}, \quad 1 \leq j \leq i \leq p,
\]
so that
\[
B(y)=
\left(\BA{cccccc}
y_1 & y_2^* & & \cdots &  & y_n^*\\
y_2 & y_1 & y_2^* & & \cdots & \\
 & y_2 & y_1 & y_2^* & & \vdots\\
\vdots & & \ddots &\ddots&\ddots\\
 & \hdots & & y_2 & y_1 & y_2^*\\
y_n &  & \hdots & & y_2 & y_1
\EA\right)\]
then when $\mathcal{F} x =\diag(b) u$, Bochner's theorem states that $B(\diag(b)u)\succeq 0$ iff $x\geq 0$. The contraint $B(\diag(b)u)\succeq 0$ is a linear matrix inequality in $u$, hence is convex. 

Suppose that we observe multiple illuminations and that the $k$ masks $I_1,\ldots,I_k \in \reals^{p \times p}$ are also nonnegative (e.g. random coherent illuminations), we have
\[
Ax=\begin{pmatrix}\mathcal{F}(I_1\circ x)\\\vdots\\\mathcal{F}(I_k\circ x)\end{pmatrix},
\]
and the phase retrieval problem~\eqref{eq:ph-partit} for positive signals $x$ is now written
\[\BA{ll}
\mbox{minimize} & u^*Mu\\
\mbox{subject to} & B_j(\diag(b)u)\succeq 0, \quad j=1,\ldots,k\\
& |u_i|=1,\quad i=1,\ldots n,
\EA\]
where $B_j(y)$ is the matrix $B(y^{(j)})$, where $y^{(j)}\in\complex^p$ is the $j^{th}$ subvector of $y$ (one for each of the $k$ masks). We can then adapt the~\ref{eq:ph-SDP} relaxation to incorporate the positivity requirement. In the one dimensional case, using again the classical lifting argument in \citep{Shor87,Lova91}, it becomes 
\BEQ\label{eq:ph-SDP-pos}\tag{PhaseCut+}
\BA{ll}
\mbox{min.} & \Tr(UM)\\
\mbox{subject to} & \diag(U)=1,\,u_1=1,\\
& B_j(\diag(b)u)\succeq 0, \quad j=1,\ldots,k\\
& \left(\BA{cc}
U & u\\
u^* & 1\EA\right) \succeq 0
\EA\EEQ
in the variables $U\in\symm_n$ and $u\in\complex^n$. The phase vector $u$ is fixed up to an arbitrary global shift, and the additional constraint $u_1=1$ allows us to exclude degenerate solutions with $u=0$. Similar results apply in multiple dimensions, since the 2D Fourier transform is simply computed by applying the 1D Fourier transform first to columns then to rows.

The SDP relaxation~\ref{eq:ph-SDP-pos} cannot be solved using block coordinate descent. Without positivity constraints, the relaxation~\ref{eq:sdp-real} designed for real signals can be solved efficiently using the algorithm in~\citep{Helm96}. The constraint structure in~\ref{eq:sdp-real} means that the most expensive step at each iteration of the algorithm in~\citep{Helm96} is computing the inverse of a symmetric matrix of dimension $n$ (or less, exploiting sparsity in $b$). Sparse instances of the more complex relation~\ref{eq:ph-SDP-pos} were solved using SDPT3 \citep{Toh96} in what follows.

\section{Numerical Experiments}\label{s:numres}

We study molecular imaging problems based on electronic densities obtained from the Protein Data Bank \citep{Berm02}. From a 3D image, we obtain a 2D projection by integrating the third dimension. After normalizing these images, we simulate multiple diffraction observations for each molecule, using several random masks. Here, our masks consist of randomly generated binary filters placed before the sample, but other settings are possible \citep{Cand13}. Our vector of observations then corresponds to the magnitude of the Fourier transform of the componentwise product of the image and the filter. As in the SPSIM package~\citep{Maia13} simulating diffraction imaging experiments, random Poisson noise is added to the observations, modeling sensor and electronic noise. More specifically, the noisy intensity measurements are obtained using the following formula,
\[
I = \sqrt{\max\left\{ 0, \alpha \cdot \mathrm{Poisson}\left(\frac{|Ax|^2}{\alpha}\right) \right\}},
\] 
where $\alpha$ is the input level of noise, and $\mathrm{Poisson(\lambda)}$ is a random Poisson sample of mean $\lambda$. We ensure that all points of the electronic density are illuminated at least once by the random masks (the first mask lets all the signal go through) and call mask ``resolution" the number of pixels in a square unit of the mask. For instance masks of resolution $4\times4$ pixels in a $16\times16$ pixels image will consist of sixteen square blocks of size $4\times4$ pixels, each block being either all zeros or all ones.

We present numerical experiments on two molecules from the Protein Data Bank (PDB), namely caffeine and cocaine, with very different structure (properly projected, caffeine is mostly circular, cocaine has a star shape). Images of the caffeine and cocaine molecules at low and high resolutions are presented in Figure~\ref{fig:twoMol}. We first use ``high" $128\times 128$ pixels resolutions to evaluate the sensitivity of~\ref{eq:ph-SDP} to noise and number of masks using the fast~\ref{alg:block} algorithm (see section~\ref{s:PhaseCutBCDExp}). We then use a ``low" $16\times 16$ pixels image resolution to compare PhaseCut formulations using structural constraints, \ie~complex~\ref{eq:ph-SDP}, real~\ref{eq:sdp-real}, and~\ref{eq:ph-SDP-pos} (with positivity constraints, see section~\ref{s:phaseTransition}) on a large number of random experiments.

\begin{figure}[h]
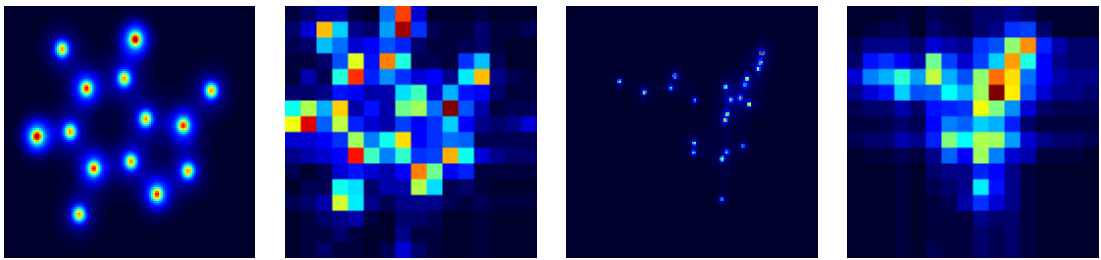

\begin{center}
\begin{tabular}{cccc}
\includegraphics[width=.33\textwidth]{figures/imageCaff128pix.eps}&
\includegraphics[width=.33\textwidth]{figures/imageCaff16pix.eps}&
\includegraphics[width=.33\textwidth]{figures/imageCoc128pix.eps}&
\includegraphics[width=.33\textwidth]{figures/imageCoc16pix.eps}\\
\end{tabular}
\end{center}
\caption{Two molecules, caffeine (left) and cocaine (right), at two resolutions: $16\times 16$ and $128\times 128$.
\label{fig:twoMol}}
\end{figure}

\begin{center}
\begin{table}
\begin{tabular}{c|c|c|c}
& Caffeine & Cocaine & Lysozyme \\\hline
Nb. atoms & 14 & 43 & 1309 \\\hline
$16\times 16$ res. & 58 \% & 40 \% & 11 \%  \\\hline
$32\times 32$ res. & 44 \% & 40 \% & 20 \%  \\\hline
$64\times 64$ res. & 15 \% & 55 \% & 14 \%  \\\hline
$128\times 128$ res. & 4 \% & 55 \% & 4 \%  \\ 
\end{tabular}
\caption{Percentage of 2D FFT coefficients required to reach $10^{-1.5}$ relative MSE, without oversampling. \label{tab:sparse1}}
\end{table}
\end{center}

\begin{center}
\begin{table}
\begin{tabular}{c|c|c|c}
& Caffeine & Cocaine & Lysozyme \\\hline
Nb. atoms & 14 & 43 & 1309 \\\hline
$16\times 16$ res. & 48 \% & 34 \% & 10 \%  \\\hline
$32\times 32$ res. & 37 \% & 35 \% & 17 \%  \\\hline
$64\times 64$ res. & 13 \% & 48 \% & 12 \%  \\\hline
$128\times 128$ res. & 4 \% & 49 \% & 4 \%  \\ 
\end{tabular}
\caption{Percentage of 2D FFT coefficients required to reach $10^{-1.5}$ relative MSE, with 2x oversampling.\label{tab:sparse2}}
\end{table}
\end{center}

\subsection{High resolution experiments using~\ref{alg:block}}\label{s:PhaseCutBCDExp}
We first compare the results obtained by the~\ref{alg:fienup} and~\ref{alg:block} algorithms while varying the number of masks and the noise level. For the PhaseCut relaxation, in order to deal with the large size of the lifted matrix, we use the low rank approximation described in \S\ref{ss:low-rnk} to store iterates and exploit sparsity in the magnitude of the  observations vector described as described in \S\ref{ss:regularity}. Tables~\ref{tab:sparse1} and~\ref{tab:sparse2} illustrate the impact of image resolution and oversampling on the fraction of coefficients required to approximately reconstruct a molecular density up to a given quality threshold, for various molecules. We observe that the sparsity of 2D FFTs increases with resolution and oversampling, but varies from one molecule to another. We then retrieve the phase vector as the first eigenvector in the final low rank approximation, then refine it with the greedy algorithms~\ref{alg:greedy} or~\ref{alg:fienup}. 

\subsubsection{Parameters}
More specifically, in the experiments that follow, the image was of size $128 \times 128$, we used a rank of two for the low rank approximation, kept the largest 1000 observations, did 5000 iterations of algorithm~\ref{alg:fienup}, and 20 cycles of algorithm~\ref{alg:block} (one cycle corresponds to optimizing once over all  rows/columns of the lifted matrix). We compared the results of the phase recovery using one to four masks, and three different levels of Poisson noise (no noise,  ``small" noise, ``large" noise). In all settings, all points of the electronic density were illuminated at least once by the random masks (the first mask lets all the signal go through). The noisy (Poisson) intensity measurements were obtained using the formula described above. Experiments were performed on a regular Linux desktop using Matlab for the greedy algorithms and a C implementation of the block coordinate algorithm for PhaseCut. Reported CPU times are in seconds.

\subsubsection{Results}
In most cases both algorithm~\ref{alg:fienup} and~\ref{alg:block} seem to converge to the (global) optimal solution, though~\ref{alg:fienup} is much faster. In some cases however, such as the experiment with two filters and no noise in Figure~\ref{fig:sdp}, initializing algorithm~\ref{alg:fienup} with the solution from~\ref{alg:block} significantly outperforms the solution obtained by algorithm~\ref{alg:fienup} alone, which appears to be stuck in a local minimum. The corresponding MSE values are listed in Table~\ref{tab:mse-img}. In Figure~\ref{fig:hist2} we plot the histogram of MSE for the noiseless case with only two illuminations, using either algorithm~\ref{alg:fienup}, or~\ref{alg:block} followed by greedy refinements, over many random illumination configurations. We observe that in many samples, algorithm~\ref{alg:fienup} gets stuck in a local optimum, while the SDP always converges to a global optimum.

 \begin{figure}[ht]
 \psfrag{a0f1}[c][c]{1 ill., $\alpha=0$}
 \psfrag{a0f2}[c][c]{2 ill., $\alpha=0$}
 \psfrag{a0f3}[c][c]{3 ill., $\alpha=0$}
 \psfrag{a0f4}[c][c]{4 ill., $\alpha=0$}
 \psfrag{a0.001f1}[c][c]{1 ill., $\alpha=10^{-3}$}
 \psfrag{a0.001f2}[c][c]{2 ill., $\alpha=10^{-3}$}
 \psfrag{a0.001f3}[c][c]{3 ill., $\alpha=10^{-3}$}
 \psfrag{a0.001f4}[c][c]{4 ill., $\alpha=10^{-3}$}
 \psfrag{a0.01f1}[c][c]{1 ill., $\alpha=10^{-2}$}
 \psfrag{a0.01f2}[c][c]{2 ill., $\alpha=10^{-2}$}
 \psfrag{a0.01f3}[c][c]{3 ill., $\alpha=10^{-2}$}
 \psfrag{a0.01f4}[c][c]{4 ill., $\alpha=10^{-2}$}
 \includegraphics[scale=1]{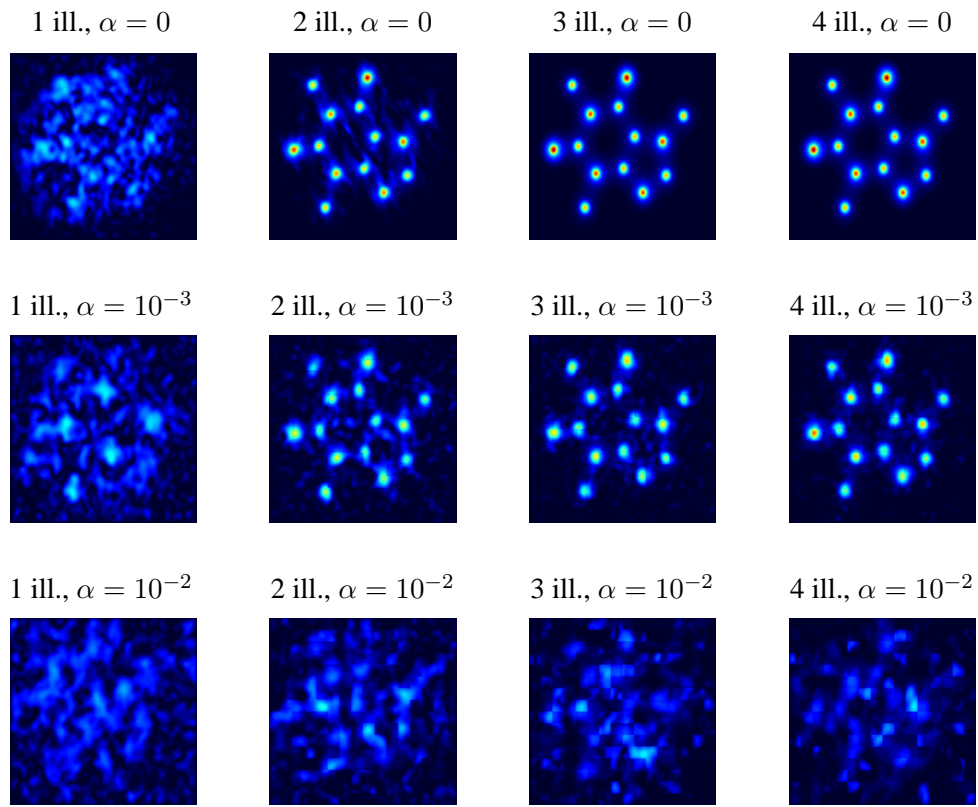}
 \caption{Solution of the semidefinite relaxation algorithm~\ref{alg:block} followed by greedy refinements, for various values of the number of  filters and noise level $\alpha$. \label{fig:sdp}}
 \end{figure}

 \begin{figure}[ht]
 \begin{center}
 \psfrag{a0f1}[c][c]{1 ill., $\alpha=0$}
 \psfrag{a0f2}[c][c]{2 ill., $\alpha=0$}
 \psfrag{a0f3}[c][c]{3 ill., $\alpha=0$}
 \psfrag{a0f4}[c][c]{4 ill., $\alpha=0$}
 \psfrag{a0.001f1}[c][c]{1 ill., $\alpha=10^{-3}$}
 \psfrag{a0.001f2}[c][c]{2 ill., $\alpha=10^{-3}$}
 \psfrag{a0.001f3}[c][c]{3 ill., $\alpha=10^{-3}$}
 \psfrag{a0.001f4}[c][c]{4 ill., $\alpha=10^{-3}$}
 \psfrag{a0.01f1}[c][c]{1 ill., $\alpha=10^{-2}$}
 \psfrag{a0.01f2}[c][c]{2 ill., $\alpha=10^{-2}$}
 \psfrag{a0.01f3}[c][c]{3 ill., $\alpha=10^{-2}$}
 \psfrag{a0.01f4}[c][c]{4 ill., $\alpha=10^{-2}$}
 \includegraphics[scale=1]{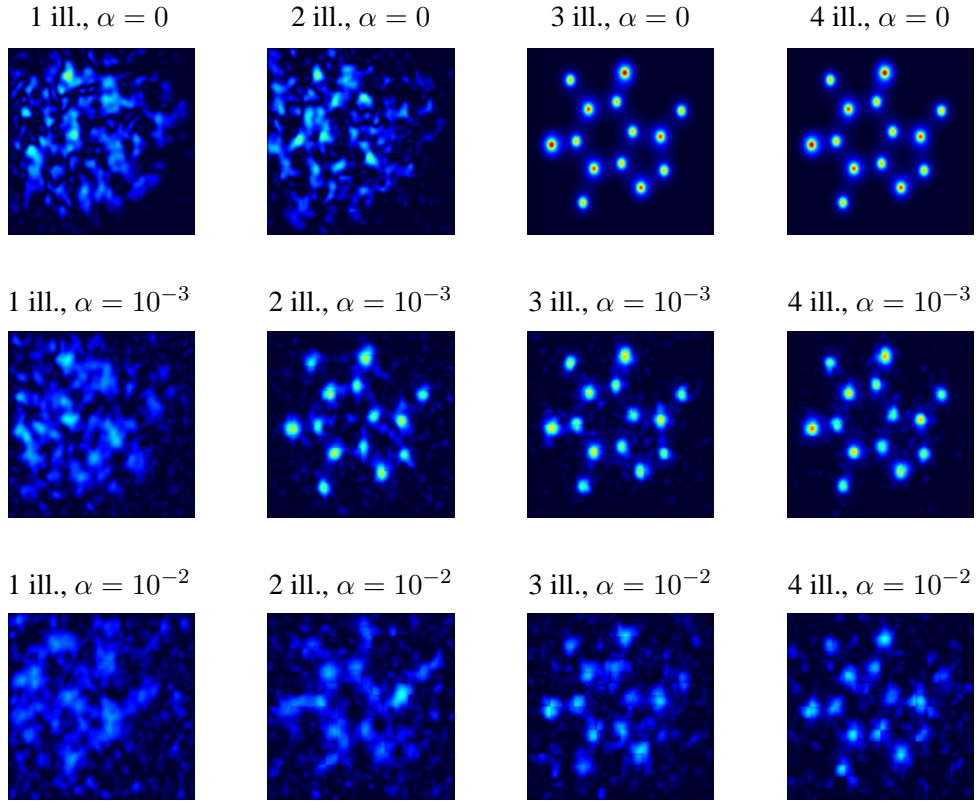}
 \end{center}
 \caption{Solution of the greedy algorithm~\ref{alg:fienup}, for various values of the number of  filters and noise level $\alpha$. \label{fig:Fienup}}
 \end{figure}

 \begin{figure}[htbp]
 \begin{center}
 \psfrag{MSE}[t][b]{MSE}
 \includegraphics[scale=0.35]{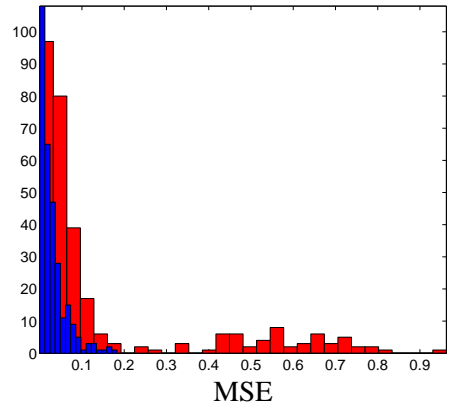}
 \caption{Histogram of MSE for the noiseless case with only two illuminations, using either algorithm~\ref{alg:fienup} (red), or~\ref{alg:block} (blue) followed by greedy refinements, over many random starts. \label{fig:hist2}}
 \end{center}
 \end{figure}

\begin{table}[h]
\begin{tabular}{c|c|c|c|c|c|c}
Nb masks&$\alpha$&SDP obj&SDP Refined obj&Fienup obj&SDP time&Fienup time\\\hline
1&0&0.410&\bf0.021&0.046&104&78\\\hline
2&0&2.157&\bf0.069&0.877&179&156\\\hline
3&0&4.036&\bf0.000&\bf0.000&256&241\\\hline
4&0&7.075&\bf0.000&\bf0.000&343&343\\\hline
1&$10^{-3}$&0.806&\bf0.691&0.704&102&67\\\hline
2&$10^{-3}$&3.300&\bf2.986&2.992&182&144\\\hline
3&$10^{-3}$&5.691&\bf5.276&5.277&263&229\\\hline
4&$10^{-3}$&8.074&\bf7.259&\bf7.259&351&308\\\hline
1&$10^{-2}$&3.299&\bf3.187&3.193&104&71\\\hline
2&$10^{-2}$&\bf7.492&8.622&8.646&182&145\\\hline
3&$10^{-2}$&\bf10.419&14.277&14.288&263&222\\\hline
4&$10^{-2}$&\bf14.139&21.472&21.444&349&305\\
\end{tabular}
\caption{Performance comparison between algorithms~\ref{alg:fienup} and~\ref{alg:block} for various values of the number of  filters and noise level $\alpha$. CPU times are in seconds. \label{tab:mse-img}}
\end{table}

\subsection{Performance of PhaseCut relaxations with respect to number of masks, noise and filter resolution}\label{s:phaseTransition}
We now compare PhaseCut formulations with structural constraints, \ie~complex~\ref{eq:ph-SDP}, real~\ref{eq:sdp-real}, and~\ref{eq:ph-SDP-pos} (with positivity constraints, see section~\ref{s:phaseTransition}) on a large number of random experiments formed using ``low" $16\times 16$ pixels image resolution.

\subsubsection{Varying the number of masks}
Masks are of resolution $1\times1$ and no noise is added.
As shown in Figures~\ref{fig:phaseTransCaff1} and~\ref{fig:phaseTransCoc1},~\ref{eq:ph-SDP},~\ref{eq:sdp-real} and~\ref{eq:ph-SDP-pos} with~\ref{alg:fienup} post processing (respectively ``SDP + Fienup HIO",  ``SDP + real + Fienup HIO", ``SDP + real + toeplitz + Fienup HIO" and ``Fienup HIO" curves on the figure) all outperform ~\ref{alg:fienup} alone. For PhaseCut, in most cases, two to three masks seem enough to exactly recover the phase. Moreover, as expected,~\ref{eq:sdp-real} performs a little bit better than \ref{eq:ph-SDP}, but surprisingly, positivity constraints of~\ref{eq:ph-SDP-pos} do not seem to improve the solution of~\ref{eq:sdp-real} in these experiments. Finally, as shown in Figures~\ref{fig:phaseTransCaff2} and~\ref{fig:phaseTransCoc2}, oversampling the Fourier transform seems to have a positive impact on the reconstruction. Results on caffeine and cocaine are very similar.

\begin{figure}[hp]
\begin{center}
\begin{tabular}{ccc}
\psfrag{nbFilters}[t][b]{Number of masks}
\psfrag{logMSE}[b][t]{MSE}
\includegraphics[scale=0.4]{./figures/logMSE_Illum_M1_n16.eps}
&~&
\psfrag{nbFilters}[t][b]{Number of masks}
\psfrag{probRec}[b][t]{Prob. of exact recovery}
\includegraphics[scale=0.4]{./figures/probRec_Illum_M1_n16.eps}
\end{tabular}
\end{center}
\caption{$16\times 16$ caffeine image. No oversampling. {\em Left:} MSE (relative to $\|b\|$) vs. number of random masks. {\em Right:} Probability of recovering molecular density ($MSE<10^{-4}$) vs. number of random masks.
\label{fig:phaseTransCaff1}}
\end{figure}

\begin{figure}[hp]
\begin{center}
\begin{tabular}{ccc}
\psfrag{nbFilters}[t][b]{Number of masks}
\psfrag{logMSE}[b][t]{MSE}
\includegraphics[scale=0.4]{./figures/logMSE_Illum_M1_n16_coc.eps}
&~&
\psfrag{nbFilters}[t][b]{Number of masks}
\psfrag{probRec}[b][t]{Prob. of exact recovery}
\includegraphics[scale=0.4]{./figures/probRec_Illum_M1_n16_coc.eps}
\end{tabular}
\end{center}
\caption{$16\times 16$ cocaine image. No oversampling.
{\em Left:} MSE (relative to $\|b\|$) vs. number of random masks. 
{\em Right:} Probability of recovering molecular density ($MSE<10^{-4}$) vs. number of random masks.
\label{fig:phaseTransCoc1}}
\end{figure}

\begin{figure}[hp]
\begin{center}
\begin{tabular}{ccc}
\psfrag{nbFilters}[t][b]{Number of masks}
\psfrag{logMSE}[b][t]{MSE}
\includegraphics[scale=0.4]{./figures/logMSE_Illum_M2_n16.eps}
&~&
\psfrag{nbFilters}[t][b]{Number of masks}
\psfrag{probRec}[b][t]{Prob. of exact recovery}
\includegraphics[scale=0.4]{./figures/probRec_Illum_M2_n16.eps}
\end{tabular}
\end{center}
\caption{$16\times 16$ caffeine image. 2x oversampling. {\em Left:} MSE vs. number of random masks. 
{\em Right:} Probability of recovering molecular density ($MSE<10^{-4}$) vs. number of random masks.
\label{fig:phaseTransCaff2}}
\end{figure}

\begin{figure}[hp]
\begin{center}
\begin{tabular}{ccc}
\psfrag{nbFilters}[t][b]{Number of masks}
\psfrag{logMSE}[b][t]{MSE}
\includegraphics[scale=0.4]{./figures/logMSE_Illum_M2_n16_coc.eps}
&~&
\psfrag{nbFilters}[t][b]{Number of masks}
\psfrag{probRec}[b][t]{Prob. of exact recovery}
\includegraphics[scale=0.4]{./figures/probRec_Illum_M2_n16_coc.eps}
\end{tabular}
\end{center}
\caption{$16\times 16$ cocaine image. 2x oversampling. {\em Left:} MSE vs. number of random masks. 
{\em Right:} Probability of recovering molecular density ($MSE<10^{-4}$) vs. number of random masks.
\label{fig:phaseTransCoc2}}
\end{figure}

\subsubsection{Varying mask resolution}

Here, two or three masks are used and no noise is added. As shown in Figures~\ref{fig:filterResTransCaff3Illum},~\ref{fig:filterResTransCoc3Illum},~\ref{fig:filterResTransCaff2Illum} and~\ref{fig:filterResTransCoc2Illum}, we can see that the MSE of reconstructed images increase with the resolution of masks. Moreover \ref{eq:sdp-real} is more robust to lower mask resolution than~\ref{eq:ph-SDP}. Finally, as expected, with more randomly masked illuminations, we can afford to lower mask resolution.

\begin{figure}[hp]
\begin{center}
\begin{tabular}{ccc}
\psfrag{filterRes}[t][b]{Mask resolution}
\psfrag{logMSE}[b][t]{MSE}
\includegraphics[scale=0.4]{./figures/logMSE_filterRes_M2_n16_3illum.eps}
&~&
\psfrag{filterRes}[t][b]{Mask resolution}
\psfrag{probRec}[b][t]{Prob. of exact recovery}
\includegraphics[scale=0.4]{./figures/probRec_filterRes_M2_n16_3illum.eps}
\end{tabular}
\end{center}
\caption{$16\times 16$ caffeine image. Mask resolution (1x1 to 8x8 pixels). {\em Left:} MSE vs. mask resolution. {(2x oversampling, no noise, 3 masks).} {\em Right:} Probability of recovering molecular density ($MSE<10^{-4}$) vs. number of random masks.
\label{fig:filterResTransCaff3Illum}}
\end{figure}

\begin{figure}[hp]
\begin{center}
\begin{tabular}{ccc}
\psfrag{filterRes}[t][b]{Mask resolution}
\psfrag{logMSE}[b][t]{MSE}
\includegraphics[scale=0.4]{./figures/logMSE_filterRes_M2_n16_3illum_coc.eps}
&~&
\psfrag{filterRes}[t][b]{Mask resolution}
\psfrag{probRec}[b][t]{Prob. of exact recovery}
\includegraphics[scale=0.4]{./figures/probRec_filterRes_M2_n16_3illum_coc.eps}
\end{tabular}
\end{center}
\caption{$16\times 16$ cocaine image. Mask resolution (1x1 to 8x8 pixels). {\em Left:} MSE vs. mask resolution. {(2x oversampling, no noise, 3 masks).} {\em Right:} Probability of recovering molecular density ($MSE<10^{-4}$) vs. number of random masks.
\label{fig:filterResTransCoc3Illum}}
\end{figure}

\begin{figure}[hp]
\begin{center}
\begin{tabular}{ccc}
\psfrag{filterRes}[t][b]{Mask resolution}
\psfrag{logMSE}[b][t]{MSE}
\includegraphics[scale=0.4]{./figures/logMSE_filterRes_M2_n16_2illum.eps}
&~&
\psfrag{filterRes}[t][b]{Mask resolution}
\psfrag{probRec}[b][t]{Prob. of exact recovery}
\includegraphics[scale=0.4]{./figures/probRec_filterRes_M2_n16_2illum.eps}
\end{tabular}
\end{center}
\caption{$16\times 16$ caffeine image. Mask resolution (1x1 to 8x8 pixels). {\em Left:} MSE vs. mask resolution. {(2x oversampling, no noise, 2 masks).} {\em Right:} Probability of recovering molecular density ($MSE<10^{-4}$) vs. number of random masks.
\label{fig:filterResTransCaff2Illum}}
\end{figure}

\begin{figure}[hp]
\begin{center}
\begin{tabular}{ccc}
\psfrag{filterRes}[t][b]{Mask resolution}
\psfrag{logMSE}[b][t]{MSE}
\includegraphics[scale=0.4]{./figures/logMSE_filterRes_M2_n16_2illum_coc.eps}
&~&
\psfrag{filterRes}[t][b]{Mask resolution}
\psfrag{probRec}[b][t]{Prob. of exact recovery}
\includegraphics[scale=0.4]{./figures/probRec_filterRes_M2_n16_2illum_coc.eps}
\end{tabular}
\end{center}
\caption{$16\times 16$ cocaine image. Mask resolution (1x1 to 8x8 pixels).
{\em Left:} MSE vs. mask resolution. {(2x oversampling, no noise, 2 masks).}
{\em Right:} Probability of recovering molecular density ($MSE<10^{-4}$) vs. number of random masks.
\label{fig:filterResTransCoc2Illum}}
\end{figure}

\subsubsection{Varying noise levels}
Here two masks are used (the minimum), with resolution $1\times1$. Poisson noise is added (parameterized by $\alpha$). As shown in Figures~\ref{fig:noiseTransCaff2Illum}, and~\ref{fig:noiseTransCoc2Illum}, we can see that~\ref{eq:ph-SDP} and~\ref{eq:sdp-real} are stable with regards to noise, i.e. we obtain a linear increase of the log MSE with respect to the log noise.

\begin{figure}[hp]
\begin{center}
\begin{tabular}{ccc}
\psfrag{noise}[t][b]{Noise}
\psfrag{logMSE}[b][t]{MSE}
\includegraphics[scale=0.4]{./figures/logMSE_noise_M2_n16_2illum.eps}
&~&
\psfrag{noise}[t][b]{Noise}
\psfrag{probRec}[b][t]{Prob. of exact recovery}
\includegraphics[scale=0.4]{./figures/probRec_noise_M2_n16_2illum.eps}
\end{tabular}
\end{center}
\caption{$16\times 16$ caffeine image. Noise. {\em Left:} MSE vs. noise level $\alpha$ {(2x oversampling, 2 masks).}
{\em Right:} Probability of recovering molecular density ($MSE<10^{-4}$) vs. number of random masks.
\label{fig:noiseTransCaff2Illum}}
\end{figure}

\begin{figure}[hp]
\begin{center}
\begin{tabular}{ccc}
\psfrag{noise}[t][b]{Noise}
\psfrag{logMSE}[b][t]{MSE}
\includegraphics[scale=0.4]{./figures/logMSE_noise_M2_n16_2illum_coc.eps}
&~&
\psfrag{noise}[t][b]{Noise}
\psfrag{probRec}[b][t]{Prob. of exact recovery}
\includegraphics[scale=0.4]{./figures/probRec_noise_M2_n16_2illum_coc.eps}
\end{tabular}
\end{center}
\caption{$16\times 16$ cocaine image. Noise. {\em Left:} MSE vs. noise level $\alpha$ {(2x oversampling, 2 masks).} {\em Right:} Probability of recovering molecular density ($MSE<10^{-4}$) vs. number of random masks.
\label{fig:noiseTransCoc2Illum}}
\end{figure}

\section{User guide}\label{s:uguide}

We provide here the instructions to artificially recover the image of a molecule from the Protein Data Bank using PhaseCutToolbox (download at \url{www.di.ens.fr/~aspremon}).
This example is entirely reproduced with comments in the script testPhaseCut.m.

\subsection{Installation}
Our toolbox works on all recent versions of MATLAB on Mac OS X, and on MATLAB versions anterior to 2008 on Linux (there might be conflicts with Arpack library for ulterior versions, when using ~\ref{alg:block}).
Installation only requires to put the toolbox folder and subdirectories on the Matlab path. Use for instance the command:

{\footnotesize\begin{verbatim}
>> addpath(genpath('MYPATH/PhaseCutToolbox'));
\end{verbatim}}

\noindent where MYPATH is the directory where you have copied the toolbox.

\subsection{Generate the diffraction pattern of a molecule}
Suppose we work with the caffeine molecule, on an image of resolution $128\times128$ pixels. We set the corresponding input variables.

{\footnotesize\begin{verbatim}
>> nameMol='caffeine.pdb';
>> N = 128 ; 
\end{verbatim}}

\noindent Now, we set the parameters of the masks. The number of masks (also called filters or illuminations) is set to 2. Moreover we set the filter resolution to 1. The filter resolution corresponds to the square root of the number of pixels in each block of the binary filter. The filter resolution must divide \texttt{\footnotesize N} (the square root of the number of pixels in the image). 

{\footnotesize\begin{verbatim}
>> filterRes = 1 ; 
>> nb_filters=2;
\end{verbatim}}

\noindent Since the filters are generated randomly, we set the seed of the uniform random generator to 1 in order to get reproducible experiments. Note that the quality of the phase retrieval may depend on the shape of the generated masks, especially when using only 2 or 3 filters.

{\footnotesize\begin{verbatim}
>> rand('seed',1);
\end{verbatim}}

\noindent Now we can generate an image, 2 masks and their corresponding diffraction patterns. We set the level of noise on the observations to zero here (\ie~no noise). $\alpha$ is the level of Poisson noise, and $\beta$ is the level of Gaussian noise.

{\footnotesize\begin{verbatim}
>> alpha=0;
>> beta=0;
\end{verbatim}}
\noindent We set the oversampling parameter for the Fourier transform to 2.
{\footnotesize\begin{verbatim}
>> OSF = 2; 
\end{verbatim}}

\noindent The total number of observations, \ie~the size of the vector $b$ is 
{\footnotesize\begin{verbatim}
>> nbObs=N*N*OSF*OSF*nb_filters;
\end{verbatim}}

\noindent Suppose that we want to use only the first largest one thousand observations in PhaseCut, we set
{\footnotesize\begin{verbatim}
>> nbObsKept=1000;
\end{verbatim}}
Note that the number of observations that is sufficient to get close to the optimal solution depends on the size of the data \texttt{\footnotesize N} and the sparsity of the vector \texttt{\footnotesize b}. From a more practical point of view, the larger \texttt{\footnotesize nbObsKept}, the more time intensive the optimization. Therefore, for a quick test we recommend setting \texttt{\footnotesize nbObsKept} to a few thousands, then increasing it if the results are not satisfying.

Finally we call the function \texttt{\footnotesize genData} which is going to generate both the image \texttt{\footnotesize x} we want to recover, \texttt{\footnotesize filters}, and observations \texttt{\footnotesize b}. \texttt{\footnotesize bs} corresponds to the thousand largest observations, \texttt{\footnotesize xs} is the image recovered with the true phase but using only \texttt{\footnotesize bs}. \texttt{\footnotesize idx\_{bs}} is the logical indicator vector of \texttt{\footnotesize bs} (\texttt{\footnotesize bs=b(idx\_{bs})}). We put \texttt{\footnotesize displayFig} to 1 in order to display the filters, the images of the molecule \texttt{\footnotesize x} and \texttt{\footnotesize xs}, as well as the diffraction patterns (with and without noise).
{\footnotesize\begin{verbatim}
>> displayFig=1;
>> [x,b,filters,bs,xs,idx_bs] = genData(nameMol, nb_filters, ...
 filterRes, N, alpha, beta, OSF, nbObsKept, displayFig);
\end{verbatim}}

\subsection{Phase Retrieval using Fienup and/or PhaseCut}

Using the data generated in the previous section, we retrieve the phase of the observations vector \texttt{\footnotesize b}.
Suppose we want to use the SDP relaxation with greedy refinement, we set
{\footnotesize\begin{verbatim}
>> method='SDPRefined';
\end{verbatim}}
The other choices for \texttt{\footnotesize method} are \texttt{\footnotesize 'Fienup'}, \texttt{\footnotesize 'Fienup HIO'} and \texttt{\footnotesize 'SDP'} (no greedy refinement). 
We set the initial (full) phase vector $u$ to the vector of ones, and the number of iterations for Fienup algorithm to 5000. The number of iterations for Fienup algorithm must be large enough so that the objective function converges to a stationary point. In most cases 5000 iterations seems to be enough.

{\footnotesize\begin{verbatim}
>> param.uInit=ones(nbObs,1);
>> param.nbIterFienup=5000;

\end{verbatim}}
We also need to choose which algorithm we want to use in order to solve the SDP relaxation. For high resolution images, we recommend to always use the block coordinate descent algorithm with a low rank approximation of the lifted matrix (BCDLR), since interior points  methods (when using SDPT3 or Mosek) and block coordinate descent without low rank approximation (BCD) become very slow when the number of observations used is over a few thousands. 
{\footnotesize\begin{verbatim}
>> param.SDPsolver='BCDLR';
\end{verbatim}}
If we had wanted to solve~\ref{eq:sdp-real} or ~\ref{eq:ph-SDP-pos} we would have set
{\footnotesize\begin{verbatim}
>> param.SDPsolver='realSDPT3';
\end{verbatim}}
or
{\footnotesize\begin{verbatim}
>> param.SDPsolver='ToepSDPT3';
\end{verbatim}}
We can now set up the parameters for the BCDLR solver.
{\footnotesize\begin{verbatim}
>> param.nbCycles=20;
>> param.r=2;
\end{verbatim}}
One cycle corresponds to optimizing over all the columns of the lifted matrix. In most cases, it seems that using \texttt{\footnotesize nbCycles} between 20 and 40 is enough to get close to the optimum, at least when refining the solution with Fienup algorithm.
\texttt{\footnotesize r} is the rank for the low rank approximation of the lifted matrix. Similarly it seems that \texttt{\footnotesize r} between 2 and 4 gives reasonable results. Note that you can check that the low rank approximation is valid by looking at the maximum ratio between the last and the first eigenvalues throughout all iterations of the BCDLR algorithm. This ratio is outputted as \texttt{\footnotesize relax.eigRatio} when calling the function \texttt{\footnotesize retrievePhase} (see below).
We finally call the function \texttt{\footnotesize retrievePhase} in order to solve the SDP relaxation with greedy refinement.
{\footnotesize\begin{verbatim}
>> data.b=b;
>> data.bs=bs;
>> data.idx_bs=idx_bs;
>> data.OSF=OSF;
>> data.filters=filters;
>> [retrievedPhase, objValues, finalObj,relax] = retrievePhase(data,method,param);
\end{verbatim}}
The function retrievePhase outputs the vector of retrieved phase as \texttt{\footnotesize retrievedPhase} and the values of the objective function at each iteration/cycle of the algorithm in \texttt{\footnotesize objValues} (add \texttt{\footnotesize .Fienup}, \texttt{\footnotesize .SDP} \texttt{\footnotesize .SDPREfined} to \texttt{\footnotesize retrievedPhase} and \texttt{\footnotesize objValues} to get the corresponding retrieved phase and objective value). If using the SDP relaxation, the vector \texttt{\footnotesize retrievedPhase} is the first eigenvector of the final lifted matrix in PhaseCut. Note that the objective value in Fienup and in the SDP relaxation do not correspond exactly since the lifted matrix may be of rank bigger than one during the iterations of the BCDLR. Therefore we also output \texttt{\footnotesize finalObj}, which is the objective value of the phase vector extracted from the lifted matrix (i.e. the vector \texttt{\footnotesize retrievedPhase}). The image can now be retrieved using the command
{\footnotesize\begin{verbatim}
>> xRetreived=pseudo_inverse_A(retrievedPhase.SDPRefined.*b,filters,M);
\end{verbatim}}
Finally you can visualize the results using the following standard Matlab commands, plotting the objective values
{\footnotesize\begin{verbatim}
>> figure(1)
>> subplot(2,1,1); 
>> title(method)
>> plot(log10(abs(objValues))); axis tight
\end{verbatim}}
and displaying images
{\footnotesize\begin{verbatim}
>> subplot(2,3,4)
>> imagesc(abs(x));axis off; 
>> subplot(2,3,5)
>> imagesc(abs(xs));axis off; 
>> subplot(2,3,6)
>> imagesc(abs(xRetreived)); axis off; 
\end{verbatim}}

\subsection{Reproduceing the experiments of the paper}
All the numerical experiments of the paper can be reproduced using the Matlab scripts included in the toolbox directory \texttt{Experiments}.
\begin{itemize}
\item \texttt{phaseTransition\_OSF1.m} (evolution of MSE with number of filters, with no oversampling of the Fourier transform, Figures~\ref{fig:phaseTransCaff1}, ~\ref{fig:phaseTransCoc1})
\item \texttt{phaseTransition\_OSF2.m} (evolution of MSE with number of filters, with oversampling of the Fourier transform  Figures~\ref{fig:phaseTransCaff2}, ~\ref{fig:phaseTransCoc2})
\item \texttt{filterResTransition.m} (evolution of MSE with filter resolution, figures~\ref{fig:filterResTransCaff2Illum},~\ref{fig:filterResTransCoc2Illum}, \ref{fig:filterResTransCaff3Illum},~\ref{fig:filterResTransCoc3Illum}).
\item \texttt{noiseTransition.m} (evolution of MSE with noise, Figures~\ref{fig:noiseTransCaff2Illum},~\ref{fig:noiseTransCoc2Illum}) 
\item \texttt{testNoiseNbIllums.m} (test noise vs number of filters, Figures~\ref{fig:sdp} and~\ref{fig:Fienup}, and table~\ref{tab:mse-img})
\item \texttt{testSeeds.m} (test different seeds to generate filters, Figure~\ref{fig:hist2})
\end{itemize}

\section*{Acknowledgments}  AA and FF would like to acknowledge support from a starting grant from the European Research Council (project SIPA).

\small{\bibliographystyle{plainnat}\bibsep 1ex
\bibliography{/Users/aspremon/Dropbox/Research/Biblio/MainPerso.bib}}
\end{document}